\documentclass{amsart}
\usepackage{amsmath,amssymb,amsthm,amscd, mathtools, latexsym}
\usepackage{enumerate,varioref, dsfont}
\usepackage{tikz-cd}
\usepackage{enumitem}

\usepackage{hyperref}
\usepackage{url}

\newtheorem{Thm}{Theorem}[section]

\newtheorem{Prop}{Proposition}[section]

\newtheorem{Conj}{Conjecture}[section]
\newtheorem{Rem}{Remark}[section]

\newtheorem{Def}{Definition}[section]

\newenvironment{claim}[1]{\par\noindent\textbf{Claim:}\space#1}{}

\newcommand{\Sym}{\text{Sym}}

\def\cit{{\mathbb C}}

\def\qit{{\mathbb Q}}
\def\zit{{\mathbb Z}}

\def\pit{{\mathbb P}}

\def\0{{\mathcal O}}

\def\Hom{\mathop{\rm Hom}\nolimits}

\def\End{\mathop{\rm End}\nolimits}

\def\h{{\mathfrak h}}

\def\h{{\mathfrak h}}
\def\S{{\mathfrak S}}

\def\M{{\mathcal M}}

\def\V{{\mathcal V}}

\def\Q{{\mathbb{Q}}}

\begin{document}

\title{THE MOTIVE OF THE FANO SURFACE OF LINES}

\author{Humberto Diaz}
\email{hdiaz123@math.duke.edu}
\address{DEPARTMENT OF MATHEMATICS, DUKE UNIVERSITY, DURHAM, NC}

\thanks{The author would like to thank his advisor Prof. C. Schoen for his patience in reading carefully several early drafts of this and for his helpful comments. He would also like to thank Prof. B. Kahn for his enthusiasm in reading a later draft and for his help in improving the first argument. }

\begin{abstract}
In this short note, we prove that the Chow motive of the Fano surface of lines on the smooth cubic threefold is finite-dimensional in the sense of Kimura. This gives an example of a variety not dominated by a product of curves whose Chow motive is of Abelian type. 
\end{abstract}

\date{}

\maketitle

\section{Introduction}
Let $k$ be a field with $\text{char } k \neq 2$ and let $X \subset \pit^{4}$ be a smooth cubic threefold. We denote by $S(X)$ (or just $S$) the Fano variety of lines in $X$. This is known to be a smooth, connected projective surface of general type. It turns out that this surface possesses a great many remarkable properties. It is known, for instance, that the Albanese map is an imbedding $i: S \hookrightarrow A:= Alb(S)$ and that the pull-back $H^{2} (A) \xrightarrow{i^{*}} H^{2} (S)$ is an isomorphism. In this note, we will prove a motivic version of this isomorphism. For this, let $\M_k$ denote the category of pure Chow motives {with rational coefficients} and let $\h: \V_{k}^{opp} \mapsto \M_{k}$ be the functor that sends a smooth projective $k$-variety $X$ to $\h(X) = (X, \Delta_{X}, 0) \in \M_{k}$. Then, our first result is that:
\begin{Thm}\label{1} The morphism $\h(i) : \h (A) \xrightarrow{} \h (S)$ is split-surjective. \end{Thm}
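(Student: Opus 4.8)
The plan is to build a section $\psi\colon\h(S)\to\h(A)$ of $\h(i)$ one Chow--K\"unneth component of $\h(S)$ at a time. I would begin by fixing a Chow--K\"unneth decomposition $\h(S)=\bigoplus_{j=0}^{4}\h^{j}(S)$, available for surfaces by Murre, and the Deninger--Murre/K\"unnemann decomposition $\h(A)=\bigoplus_{j=0}^{10}\h^{j}(A)$ with $\h^{j}(A)=\wedge^{j}\h^{1}(A)$, equipped with the motivic Lefschetz operators $L_{\theta},\Lambda_{\theta}$ attached to a polarization $\theta$ on $A$ (these are algebraic because the Lefschetz standard conjecture holds for abelian varieties). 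Since $\h(i)$ is compatible with the Betti realization, where it induces $i^{*}$, and $i^{*}$ preserves cohomological degree, $\h(i)$ sends $H^{j}(A)$ into $H^{j}(S)$; this reduces the theorem to showing, for each $j$, that $\pi_{j}^{S}\circ\h(i)\circ\pi_{j}^{A}\colon\h^{j}(A)\to\h^{j}(S)$ is a split epimorphism (one then adds up the resulting partial sections).

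Most components are soft. For $j=0$ this is clear. For $j=1$, $i$ is the Albanese map of $S$ with $A=\mathrm{Alb}(S)$, so $\h(i)$ induces an isomorphism $\h^{1}(A)\xrightarrow{\ \sim\ }\h^{1}(S)$, just as for any variety and its Albanese. For $j=3$ and $j=4$ I would use motivic hard Lefschetz for the surface $S$ (available once the Chow--K\"unneth decomposition is fixed), which gives $\h^{3}(S)\cong\h^{1}(S)(-1)$ and $\h^{4}(S)\cong\mathbf{1}(-2)$, and then realize these as the Lefschetz images $L_{\theta}\h^{1}(A)(-1)\subset\h^{3}(A)$ and $L_{\theta}^{2}\mathbf{1}(-2)\subset\h^{4}(A)$; the projection formula $\h(i)\circ L_{\theta}=L_{i^{*}\theta}\circ\h(i)$ combined with hard Lefschetz on $S$ shows that $\h(i)$ restricted to these Lefschetz images is an isomorphism onto $\h^{3}(S)$, resp.\ $\h^{4}(S)$, so these components lift. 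For $j=2$, since $i^{*}\colon H^{2}(A)\to H^{2}(S)$ is an isomorphism of Hodge structures it identifies $\mathrm{NS}(A)_{\mathbb{Q}}$ with $\mathrm{NS}(S)_{\mathbb{Q}}$; using Murre's refinement $\h^{2}(S)=\h^{2}_{\mathrm{alg}}(S)\oplus\h^{2}_{\mathrm{tr}}(S)$, the Lefschetz summand $\h^{2}_{\mathrm{alg}}(S)$ lifts to the span of the corresponding divisor classes on $A$, again compatibly with $\h(i)$. Everything but the transcendental summand $\h^{2}_{\mathrm{tr}}(S)$ is thereby handled.

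The essential case is that the induced map $f\colon\h^{2}_{\mathrm{tr}}(A)\to\h^{2}_{\mathrm{tr}}(S)$ is a split epimorphism (note that the algebraic part of $\h^{2}(A)$ maps to zero in $\h^{2}_{\mathrm{tr}}(S)$, since the latter admits no nonzero morphism from a sum of Lefschetz motives, so only the transcendental part of $\h^{2}(A)$ is relevant). Two inputs are at hand. First, $\h^{2}_{\mathrm{tr}}(A)$ is a direct summand of $\h(A)$, which is finite-dimensional in the sense of Kimura (Shermenev, Kimura, O'Sullivan for abelian varieties), hence so is $\h^{2}_{\mathrm{tr}}(A)$; thus Kimura's nilpotence theorem applies \emph{on the $A$-side}: a cohomologically invertible endomorphism of $\h^{2}_{\mathrm{tr}}(A)$ is an isomorphism. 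Second, since the cohomology class $[S]\in H^{6}(A)$ is a nonzero multiple of $\theta^{3}$ (the minimal class), the computation $\Lambda_{\theta}^{3}\circ i_{*}\circ i^{*}=c\cdot\mathrm{id}$ on $H^{2}(A)$ shows that, after normalizing and cutting down by the transcendental projectors, the correspondence $g$ built from the Gysin map $i_{*}$ and $\Lambda_{\theta}^{3}$ defines a morphism $g\colon\h^{2}_{\mathrm{tr}}(S)\to\h^{2}_{\mathrm{tr}}(A)$ with $g\circ f$ a cohomological isomorphism of $\h^{2}_{\mathrm{tr}}(A)$, hence an isomorphism by the previous point. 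This already exhibits $\h^{2}_{\mathrm{tr}}(A)$ as a direct summand of $\h^{2}_{\mathrm{tr}}(S)$.

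The main obstacle is then the final step: one must show that the complementary summand of $\h^{2}_{\mathrm{tr}}(S)$ just produced --- a Chow motive with vanishing Betti realization --- is actually zero. One cannot simply invoke Kimura's theorem on the $S$-side, since finite-dimensionality of $\h^{2}_{\mathrm{tr}}(S)$ is exactly what is being proved. To close the argument I would try to produce the idempotent in $\mathrm{CH}^{2}(S\times S)$ cutting out $\h^{2}_{\mathrm{tr}}(S)$ \emph{directly}, by transporting the Chow--K\"unneth projectors of $A$ through $\Gamma_{i}$, ${}^{t}\Gamma_{i}$ and powers of $\Lambda_{\theta}$, and then verifying the required identity of cycle classes by an intersection-theoretic computation on $A$ controlling the products $[S]\cdot\theta^{k}$ up to rational equivalence --- Beauville's Fourier transform on $\mathrm{CH}(A)$, whose formal identities hold at the level of rational equivalence, being the natural instrument. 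In short, the heart of the proof is to upgrade the cohomological isomorphism $f$ to an honest isomorphism of Chow motives without assuming finite-dimensionality of $\h(S)$, which forces one to use the very special position of $S$ inside its intermediate Jacobian rather than cohomology alone.
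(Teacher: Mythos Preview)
Your argument runs into exactly the circularity you yourself flag, and the proposed escape does not close it. Up to the transcendental degree-$2$ piece your outline is reasonable: for $j\neq 2$ the target $\h^{j}(S)$ is already known to be finite-dimensional (it is a Lefschetz motive or isomorphic to $\h^{1}(A)$ up to Tate twist), so Kimura's theorem on the \emph{target} side upgrades the cohomological isomorphisms to motivic ones. For $\h^{2}_{\mathrm{tr}}$, however, your construction of $g$ from $\Lambda_{\theta}^{3}\circ i_{*}$ together with Kimura on the $A$-side only shows that $g\circ f$ is an automorphism of $\h^{2}_{\mathrm{tr}}(A)$; this makes $f$ a split \emph{monomorphism}, whereas Theorem~\ref{1} requires $f$ to be a split \emph{epimorphism}. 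For that you would need $f\circ g'$ to be an automorphism of $\h^{2}_{\mathrm{tr}}(S)$ for some $g'$, and the only tool available to promote the cohomological identity $f\circ g'=\mathrm{id}$ to a motivic one is Kimura's theorem on the $S$-side---which is precisely what is at stake. The suggested remedy (transport projectors through $\Gamma_{i}$, ${}^{t}\Gamma_{i}$, $\Lambda_{\theta}$ and verify an identity in $CH^{2}(S\times S)$ via Beauville's Fourier transform) is not a proof: the Fourier identities live on $CH^{*}(A)$ and $CH^{*}(A\times A)$, and there is no mechanism that transports them to rational-equivalence identities on $S\times S$ without already controlling $CH_{0}(S)$. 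Showing that the phantom complementary summand of $\h^{2}_{\mathrm{tr}}(S)$ vanishes is equivalent in strength to the theorem itself.

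The paper avoids this trap by working at the level of Chow groups rather than motives. It uses a Manin-type principle (Proposition~\ref{BK}): to get a right inverse of $\h(i)$ it suffices that $i_{\Omega}^{*}\colon CH^{j}(A_{\Omega})\to CH^{j}(S_{\Omega})$ be surjective for $j=1,2$ and every algebraically closed extension $\Omega$. The case $j=1$ is handled via the Albanese and a N\'eron--Severi/Tate argument. The decisive input for $j=2$ is Bloch's theorem that the intersection product $CH^{1}(S)\otimes CH^{1}(S)\to CH^{2}(S)$ is surjective over any algebraically closed field of characteristic $\neq 2$. This Chow-group surjectivity is exactly what stands in for the finite-dimensionality of $\h^{2}_{\mathrm{tr}}(S)$ that your approach is missing; it is a genuinely arithmetic/geometric fact about zero-cycles on $S$, and it cannot be recovered from Lefschetz-theoretic manipulations on $A$ alone.
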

This will show that the motive of the Fano surface is {\em finite-dimensional in the sense of Kimura}. The primary known examples of surfaces with finite-dimensional motive are those for which either: \begin{enumerate}[label=(\roman*)] \item\label{rep} The Chow group of nullhomologous $0$-cycles is representable. \item\label{dom-prod} There exists a dominant rational map from a product of smooth projective curves. \end{enumerate} Since $p_{g} (S) > 0$, Mumford's Theorem (\cite{V} Theorem 3.13) shows that $CH^{2}_{hom} (S)$ is not representable (when $k = \cit$). Moreover, a recent result in \cite{S} shows that $S$ is not dominated by a product of curves. To the author's knowledge, the Fano surface is the first example of a surface with finite-dimensional motive for which neither \ref{rep} nor \ref{dom-prod} holds. One reason for interest in finite-dimensionality is that if $M \in \M_{k}$ is finite-dimensional, then any morphism $f: M \to M$ that induces an isomorphism on cohomology is actually an isomorphism of motives. This will be important to proving:
\begin{Thm}\label{2} The pull-back along the Albanese imbedding $i: S \hookrightarrow A$ induces an isomorphism $\h^{2} (A) \xrightarrow[]{\cong} \h^{2} (S)$ in $\M_{k}$. \end{Thm}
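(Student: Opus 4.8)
The plan is to realize the pull-back on the degree-$2$ pieces by an explicit correspondence $\phi\colon\h^2(A)\to\h^2(S)$, to check that $\phi$ is an isomorphism on cohomology, and then to conclude by invoking finite-dimensionality. First I would fix Chow--K\"unneth decompositions: for the abelian variety $A$ the canonical one of Deninger--Murre, $\h(A)=\bigoplus_{j}\h^j(A)$ with projectors $\pi_j^A$ (so that $\h^2(A)=\wedge^2\h^1(A)$), and for the surface $S$ Murre's decomposition $\h(S)=\bigoplus_{j=0}^{4}\h^j(S)$ with projectors $\pi_j^S$. The transpose of the graph ${}^{t}\Gamma_{i}$ defines the pull-back morphism $i^{*}\colon\h(A)\to\h(S)$ inducing restriction on every Weil cohomology, and I set $\phi:=\pi_2^S\circ{}^{t}\Gamma_{i}\circ\pi_2^A\in\Hom_{\M_k}(\h^2(A),\h^2(S))$, the $\h^2$-component of $i^{*}$.

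Next I would check that $\phi$ induces an isomorphism on cohomology. Since $(\pi_2^A)_*$ and $(\pi_2^S)_*$ act as the identity on $H^2$ and annihilate cohomology in all other degrees, and since $H^*(\h^2(A))$ and $H^*(\h^2(S))$ are concentrated in degree $2$, the map $H^*(\phi)$ is, in degree $2$, exactly the pull-back $i^*\colon H^2(A)\to H^2(S)$, which is the classical isomorphism recalled in the introduction. Hence $H^*(\phi)$ is an isomorphism.

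Then I would record finite-dimensionality of both source and target. The motive $\h(A)$ of an abelian variety is finite-dimensional by Kimura, hence so is its direct summand $\h^2(A)$. By Theorem~\ref{1}, $\h(i)$ is split-surjective, so $\h(S)$ is a direct summand of the finite-dimensional motive $\h(A)$ and is therefore itself finite-dimensional; in particular $\h^2(S)$ is finite-dimensional. Now $\phi$ is a morphism between finite-dimensional motives that is an isomorphism on cohomology: since homological and numerical equivalence agree on finite-dimensional motives, the image $\bar\phi$ of $\phi$ in the category of numerical motives is an isomorphism; lifting $\bar\phi^{-1}$ to some $\psi\colon\h^2(S)\to\h^2(A)$, the endomorphisms $\psi\circ\phi-\mathrm{id}$ and $\phi\circ\psi-\mathrm{id}$ are numerically trivial, hence nilpotent (Kimura), so $\psi\circ\phi$ and $\phi\circ\psi$ are invertible and $\phi$ is an isomorphism, as desired.

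I expect that, given Theorem~\ref{1}, the argument is essentially formal, and the only points requiring care are the bookkeeping that ${}^{t}\Gamma_{i}$ lands in the group $\Hom_{\M_k}(\h(A),\h(S))$ and is correctly "graded of degree $2$" after composing with $\pi_2^A$ and $\pi_2^S$, and the precise formulation of the passage to numerical motives together with the nilpotence of numerically trivial endomorphisms of finite-dimensional motives. These are standard; the genuine input is Theorem~\ref{1}, which supplies the finite-dimensionality of $\h(S)$.
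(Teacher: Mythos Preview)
Your overall strategy---define $\phi=\pi_2^S\circ{}^t\Gamma_i\circ\pi_2^A$, check it is an isomorphism on cohomology, and upgrade this to a motivic isomorphism using finite-dimensionality---matches the paper's. The gap is in the upgrading step. You invoke the statement ``homological and numerical equivalence agree on finite-dimensional motives'' to conclude that $\bar\phi$ is an isomorphism in $\M_k^{num}$, and then lift $\bar\phi^{-1}$. But that statement is the Standard Conjecture~D restricted to finite-dimensional motives, and it is \emph{not} a known theorem; Kimura's nilpotence result only says that numerically trivial endomorphisms are nilpotent, not that homologically nontrivial correspondences remain numerically nontrivial. Without this, you cannot pass from ``$H^*(\phi)$ is an isomorphism'' to ``$\bar\phi$ is an isomorphism in $\M_k^{num}$'', and the lifting argument never gets started. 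Note too that Theorem~4.1(e) (the only form quoted in the paper) is stated for \emph{endomorphisms} of a single finite-dimensional motive, so it does not apply directly to $\phi\colon\h^2(A)\to\h^2(S)$.

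The paper closes exactly this gap by explicitly producing a correspondence $\gamma\in CH^2(S\times A)$ with $\gamma_*=(i^*)^{-1}$ on $H^2$: one uses that $[S]=\Theta^3/3!$ in $H^6(A)$, Hard Lefschetz, and the known Lefschetz standard conjecture for abelian varieties to find $\phi_0\in CH^2(A\times A)$ with $(\phi_0)_*$ inverting $\wedge[S]$, and then sets $\gamma=\phi_0\circ\Gamma_i$ (so $\gamma_*\circ i^*=\mathrm{id}$ by the projection formula $i_*i^*=\wedge[S]$). With $\psi=\pi_{2,A}\circ\gamma\circ\pi_{2,S}$ in hand, both composites $\psi\circ\phi$ and $\phi\circ\psi$ are endomorphisms inducing the identity on cohomology, and now Theorem~4.1(e) applies legitimately. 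Your argument can be repaired along these lines---or, alternatively, by observing that since $\h(S)$ is a summand of $\h(A)$ by Theorem~1, all relevant Hom-groups sit inside $\End(\h(A))=CH^{\dim A}(A\times A)$, where $D(A\times A)$ is a consequence of the Lefschetz standard conjecture for abelian varieties---but either way the genuine input you are missing is precisely this algebraicity result for abelian varieties, not a general fact about finite-dimensional motives.
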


The plan will be as follows. We will give a proof of the Theorem \ref{1}, then review the notion of finite-dimensionality for motives which will allow us to prove Theorem \ref{2}. In this note, all Chow groups will be taken with {rational coefficients}.

\section{A General Principle}

The following variant of the Manin principle will facilitate the proof of Theorem \ref{1}, as pointed out by B. Kahn.

\begin{Prop}\label{BK} Let $X$ and $Y$ be smooth projective connected varieties over a field $k$ with a morphism $f: Y \to X$ such that $f_{\Omega}^{*} : CH^{i} (X_{\Omega}) \to CH^{i} (Y_{\Omega})$ is surjective for all $i$ and every algebraically closed extension $k \subset \Omega$. Then, $\h(f) : \h(X) \vdash \h(Y)$ is split-surjective.
\end{Prop}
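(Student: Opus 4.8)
The plan is to deduce this from a sharpening of Manin's identity principle together with a standard spreading-out argument.

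\emph{Reduction to Chow groups.} For a smooth projective connected $k$-variety $T$ one has $\mathrm{Hom}_{\M_k}(\h(T),\h(X))=CH^{\dim T}(T\times X)$, and a short computation with the composition law for correspondences and the graph of $f$ identifies post-composition with $\h(f)\colon\h(X)\to\h(Y)$ with the (lci) pull-back $(\mathrm{id}_T\times f)^{*}\colon CH^{\dim T}(T\times X)\to CH^{\dim T}(T\times Y)$; here $\mathrm{id}_T\times f$ is lci because $X$ and $Y$ are smooth. A section of $\h(f)$ is precisely a preimage of $\mathrm{id}_{\h(Y)}=\Delta_Y$ under post-composition with $\h(f)$, so, taking $T=Y$, it suffices to know that $(\mathrm{id}_Y\times f)^{*}$ hits $\Delta_Y$. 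I would in fact establish the cleaner assertion that $(\mathrm{id}_T\times f)^{*}\colon CH^{*}(T\times X)\to CH^{*}(T\times Y)$ is surjective in all degrees, for every smooth projective connected $T$, this being the form amenable to induction.

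\emph{The surjectivity.} By induction on $\dim T$. Given $z\in CH^{j}(T\times Y)$, restrict it to the fibre $Y_{\Omega}$ over $\mathrm{Spec}\,\Omega\to T$, where $\Omega$ is an algebraic closure of $k(T)$; the hypothesis, applied with this algebraically closed $\Omega$, gives $w_0\in CH^{j}(X_{\Omega})$ with $f^{*}_{\Omega}w_0=z|_{Y_{\Omega}}$. Writing $CH^{j}(X_{\Omega})$ and $CH^{j}(Y_{\Omega})$ as filtered colimits over finite subextensions of $\Omega/k(T)$, we obtain a finite extension $L/k(T)$ and $w\in CH^{j}(X_L)$ with $f^{*}_L w=z|_{Y_L}$. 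Choose a connected normal projective model $T'$ of $L$ together with a dominant generically finite morphism $\pi\colon T'\to T$ of degree $r$, and extend $w$ to a class $W\in CH^{j}(T'\times X)$. Then $(\mathrm{id}_{T'}\times f)^{*}W-(\pi\times\mathrm{id}_Y)^{*}z$ restricts to $0$ over $k(T')$, hence by the localization sequence is supported on $D\times Y$ for some closed $D\subsetneq T'$ all of whose components have dimension $<\dim T'$. Using de Jong alterations of the components of $D$, the inductive hypothesis, rational coefficients, and the compatibility of lci pull-back with proper push-forward in Cartesian squares, one checks that this difference already lies in the image of $(\mathrm{id}_{T'}\times f)^{*}$; thus $(\pi\times\mathrm{id}_Y)^{*}z=(\mathrm{id}_{T'}\times f)^{*}W'$ for some $W'\in CH^{j}(T'\times X)$. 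Pushing forward along $\pi\times\mathrm{id}_Y$, the projection formula turns the left side into $r\,z$ while base change turns the right side into $(\mathrm{id}_T\times f)^{*}\big((\pi\times\mathrm{id}_X)_{*}W'\big)$; dividing by $r$, which is legitimate over $\Q$, shows that $z$ lies in the image of $(\mathrm{id}_T\times f)^{*}$. The base case $\dim T=0$ is the same argument with $T=\mathrm{Spec}\,k$, $D=\emptyset$, and no error term.

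\emph{Conclusion and main obstacle.} Granting this, take $T=Y$ and a preimage $s\in CH^{*}(Y\times X)=\mathrm{Hom}(\h(Y),\h(X))$ of $\Delta_Y$; then $\h(f)\circ s=\mathrm{id}_{\h(Y)}$, so $\h(f)\colon\h(X)\vdash\h(Y)$ is split-surjective, as claimed. The only substantial work is the middle step: performing the spreading-out while keeping codimensions straight, verifying the projection-formula and base-change identities for the relevant Cartesian squares — vertical maps proper (the various $\pi\times\mathrm{id}$ and the closed immersions of components of $D$) and horizontal maps $\mathrm{id}\times f$ lci — and passing from the possibly singular $D\times Y$ to the smooth projective situation via alterations. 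Rational coefficients are used essentially, both to discard the degree $r$ and to use alterations.
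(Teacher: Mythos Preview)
Your proof is correct and follows the same overall strategy as the paper---spreading out plus localization plus induction on dimension---but the organization differs. The paper first establishes, as a separate claim, that the hypothesis extends from algebraically closed extensions to \emph{all} field extensions $L\supset k$, via a Galois averaging argument over a finite Galois closure; this lets the subsequent induction on $\dim Z$ (for arbitrary projective, possibly singular $Z$) proceed directly over the function field $k(Z)$, with no need to pass to a cover of $Z$ and no alterations. You instead fold the field-extension step into the induction: rather than working over $k(T)$, you pass to $\overline{k(T)}$, descend to a finite extension $L$, take a normal projective model $T'\to T$, and recover $z$ on $T$ via the projection formula---this push-pull along a degree-$r$ cover is precisely the Galois averaging trick in geometric disguise. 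The cost is that you then need de Jong alterations to bring the possibly singular support $D\subsetneq T'$ of the error term back into the smooth world where your inductive hypothesis applies. The paper's separation of concerns is cleaner and sidesteps alterations entirely; your version packs everything into a single induction but carries more technical overhead (alterations, compatibility checks on the normal but non-smooth $T'$).
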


\noindent This principle has appeared in the literature in various guises (see, for instance, \cite{J} Theorems 3.5 and 3.6). However, since this exact statement is difficult to find, we give a proof below, following \cite{KA}. We subdivide the proof into three claims:
\vspace{2 mm}
\begin{claim} $f_{L}^{*} : CH^{i} (X_{L}) \to CH^{i} (Y_{L})$ is surjective for all $i$ and every extension $k \subset L$.
\begin{proof}[Proof of Claim] Let $\beta \in CH^{i} (Y_L)$. We need to find $\alpha \in CH^{i} (X_{L})$ for which $f_{L}^{*}(\alpha) = \beta$. By assumption, $f_{\overline{L}}^{*} : CH^{i} (X_{\overline{L}}) \to CH^{i} (Y_{\overline{L}})$ is surjective, so there is some $\overline{\alpha} \in CH^{i} (X_{\overline{L}})$ such that $f_{\overline{L}}^{*}(\overline{\alpha}) = \beta_{\overline{L}}$. Let $L' \supset L$ be some finite Galois extension for which there is $\alpha' \in CH^{i} (X_{L'})$ with $\alpha'_{\overline{L}} = \overline{\alpha}$. Indeed, there is some finite extension for which this holds; since Chow groups are unchanged by passing to a purely inseparable extension, we can assume that $L'$ is a separable extension. This allows us to pass to a Galois extension. Then, let $G = Gal(L'/L)$ and define $\displaystyle \text{Tr}(\alpha') = \frac{1}{|G|} \sum_{g \in G} g^{*}\alpha'$. By \cite{F} Example 1.7.2.6, this yields $\alpha \in CH^{i} (X_{L})$ such that $\alpha_{L'} = \text{Tr}(\alpha')$. We compute:
$$(f_{L}^{*}(\alpha))_{L'} = f_{L'}^{*}(\alpha_{L'}) = f_{L'}^{*}(\text{Tr}(\alpha')) = \text{Tr}(f_{L'}^{*}(\alpha')) = \text{Tr}(\beta_{L'}) = \beta_{L'}$$
Since $CH^{i} (X_{L}) \to CH^{i} (X_{L'})$ is injective, it follows that $f_{L}^{*}(\alpha) = \beta$, as desired.
\end{proof} 
\end{claim}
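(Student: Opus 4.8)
The plan is to deduce surjectivity over an arbitrary extension $L/k$ from the algebraically closed case by a transfer (norm) argument, the essential input being that our Chow groups carry $\qit$-coefficients. So fix $L \supset k$ and a class $\beta \in CH^{i}(Y_{L})$; the goal is to produce $\alpha \in CH^{i}(X_{L})$ with $f_{L}^{*}(\alpha) = \beta$.

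First I would pass to $\overline{L}$. By hypothesis $f_{\overline{L}}^{*}$ is surjective, so $\beta_{\overline{L}} = f_{\overline{L}}^{*}(\overline{\alpha})$ for some $\overline{\alpha} \in CH^{i}(X_{\overline{L}})$. Since $X$ and $Y$ are of finite type over $k$, one has $CH^{i}(X_{\overline{L}}) = \varinjlim_{L'} CH^{i}(X_{L'})$ and $CH^{i}(Y_{\overline{L}}) = \varinjlim_{L'} CH^{i}(Y_{L'})$, the colimits running over the finite subextensions $L \subset L' \subset \overline{L}$ (a cycle, and any rational equivalence witnessing a relation, is cut out by finitely many equations, hence defined over some finite $L'$). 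Therefore $\overline{\alpha}$ descends to some $\alpha' \in CH^{i}(X_{L'})$, and since $f_{L'}^{*}(\alpha')$ and $\beta_{L'}$ then have the same image in the colimit $CH^{i}(Y_{\overline{L}})$, after enlarging $L'$ I may assume $f_{L'}^{*}(\alpha') = \beta_{L'}$ already in $CH^{i}(Y_{L'})$. Using that Chow groups are insensitive to purely inseparable base change, I may replace $L'$ by its separable closure over $L$, so that $L'/L$ is separable, and then enlarge once more so that $L'/L$ is finite Galois with group $G = \mathrm{Gal}(L'/L)$.

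Now I would average over $G$. Let $\pi : X_{L'} \to X_{L}$ be the (finite flat, degree $|G|$) projection; then $\pi_{*}\pi^{*} = |G|\cdot\mathrm{id}$ and $\pi^{*}\pi_{*}(\gamma) = \sum_{g\in G} g^{*}\gamma$, so with $\qit$-coefficients $\pi^{*}$ is injective with left inverse $\tfrac{1}{|G|}\pi_{*}$, and every $G$-invariant class $\gamma \in CH^{i}(X_{L'})$ descends, via $\tfrac{1}{|G|}\pi_{*}\gamma$, to a class over $L$. Put $\mathrm{Tr}(\alpha') = \tfrac{1}{|G|}\sum_{g\in G} g^{*}\alpha'$, a $G$-invariant element of $CH^{i}(X_{L'})$. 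Because $f$ is defined over $L$ it is $G$-equivariant, so $f_{L'}^{*}$ commutes with each $g^{*}$, while $\beta_{L'}$ is $G$-invariant since it is pulled back from $L$; hence $f_{L'}^{*}(\mathrm{Tr}(\alpha')) = \tfrac{1}{|G|}\sum_{g} g^{*}f_{L'}^{*}(\alpha') = \tfrac{1}{|G|}\sum_{g} g^{*}\beta_{L'} = \beta_{L'}$. Descending $\mathrm{Tr}(\alpha')$ to $\alpha := \tfrac{1}{|G|}\pi_{*}\mathrm{Tr}(\alpha') \in CH^{i}(X_{L})$, so that $\alpha_{L'} = \mathrm{Tr}(\alpha')$, I get $(f_{L}^{*}\alpha)_{L'} = f_{L'}^{*}(\alpha_{L'}) = \beta_{L'}$, and since $CH^{i}(X_{L}) \to CH^{i}(X_{L'})$ is injective this forces $f_{L}^{*}(\alpha) = \beta$.

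The only genuine subtlety is the reduction to a finite Galois extension, which uses both that Chow groups commute with filtered colimits in the ground field and that they are unchanged under purely inseparable extensions; after that the transfer step is purely formal, its one real ingredient being that $\qit$-coefficients make $\tfrac{1}{|G|}\pi_{*}$ available as a splitting of $\pi^{*}$ — equivalently, one may instead quote \cite{F}, Example 1.7.2.6, to identify $CH^{i}(X_{L})$ with the invariants $CH^{i}(X_{L'})^{G}$.
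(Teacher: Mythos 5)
Your proof is correct and follows essentially the same route as the paper's: pass to $\overline{L}$ and use the hypothesis, descend the preimage to a finite Galois $L'/L$ (invoking insensitivity to purely inseparable extensions), average over $G=\mathrm{Gal}(L'/L)$ to get a $G$-invariant class that descends to $L$ by the transfer (Fulton, Example 1.7.2.6), and conclude via injectivity of $CH^{i}(X_L)\to CH^{i}(X_{L'})$. The only difference is that you spell out two points the paper leaves implicit — enlarging $L'$ so that $f_{L'}^{*}(\alpha')=\beta_{L'}$ already holds over $L'$, and the $\pi_{*}\pi^{*}/\pi^{*}\pi_{*}$ mechanics of the transfer — which is a welcome bit of extra care but not a different argument.
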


\begin{claim} $(f \times id_{Z})^{*} : CH^{i} (X \times Z) \to CH^{i} (Y \times Z)$ is surjective for all $i$ and all projective varieties $Z$ over $k$.
\begin{proof}[Proof of Claim] One easily reduces to the case that $Z$ is connected. We first observe that the pull-back $$(f \times id_{Z})^{*} : CH^{i} (X \times Z) \to CH^{i} (Y \times Z)$$ is defined even when $Z$ is singular. Indeed, $f$ is a local complete intersection morphism since the source and the target are smooth, and one readily checks that so is the (fiber) product $$f \times id_{Z}: X \times Z \to Y \times Z.$$ Thus, using \cite{F} Chapter 6.6, we can define $(f \times id_{Z})^{*}.$ We then observe the following commutative diagram with rows exact:   
\begin{equation}\begin{CD}\displaystyle
\lim_{\xrightarrow[W]{}} CH^{i-c} (X \times W) @>{({id_{X} \times j_{W})_{*}}}>> CH^{i} (X \times Z) @>>> CH^{i} (X_{k(Z)}) @>>> 0\\
 @V{(f \times id_{W})^{*}}VV  @V{(f \times id_{Z})^{*}}VV  @V{(f_{k(Z)})^{*}}VV \\
\displaystyle \lim_{\xrightarrow[W]{}} CH^{i-c} (Y \times W) @>{({id_{Y} \times j_{W})_{*}}}>> CH^{i} (Y \times Z) @>>> CH^{i} (Y_{k(Z)}) @>>> 0
\label{main}
 \end{CD}\end{equation}
 where $k(Z)$ is the function field of $Z$, the limit ranges over subvarieties $j_{W}: W \hookrightarrow Z$ with $dim (W) \leq dim (Z)$ and $c = dim (Z) - dim(W)$. The localization sequence implies that the rows are exact. For the commutativity of the left diagram, note first that there is a Cartesian diagram:
 \begin{equation}\begin{CD}
 Y \times W @>{id_{Y} \times j_{W}}>> Y \times Z\\
 @V{f \times id_{W}}VV @V{f \times id_{Z}}VV\\
  X \times W @>{id_{X} \times j_{W}}>> X \times Z
 \end{CD}\end{equation}
 with the left and right vertical arrows of the same relative codimension. Then, using \cite{F} Theorem 6.2, one obtains the desired commutativity.\\
 \indent The claim then follows by an induction argument on $n=dim(Z)$. The case $n=0$ follows from the previous claim. Assume then that it holds for $n-1$. Then, we note that the rightmost vertical arrow in (\ref{main}) is surjective by the previous claim. The leftmost arrow is surjective by the inductive hypothesis. A diagram chase then shows that the statement is true for $n$. Hence, the claim.  
 \end{proof}
\end{claim}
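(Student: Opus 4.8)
The plan is to establish surjectivity of $(f\times\mathrm{id}_Z)^*$ for all $i$ by induction on $n=\dim Z$, taking Claim~1 (surjectivity of $f_L^*$ for every field extension $L/k$) both as the base case and as the input that controls the generic fibre in the inductive step; this is the standard localization bootstrap underlying Manin-principle type arguments. Before the induction I would dispose of two routine points. First, since $CH^\bullet$ converts disjoint unions into direct sums and is insensitive to nilpotents, one may assume $Z$ is an integral $k$-variety (in particular its function field $k(Z)$ is a field). Second, one must make sense of the pullback $(f\times\mathrm{id}_Z)^*$ when $Z$ is singular: since $f\colon Y\to X$ is a morphism of smooth varieties it is a local complete intersection morphism, and this property is stable under the (flat) base change $-\times Z$, so $f\times\mathrm{id}_Z\colon Y\times Z\to X\times Z$ is l.c.i. and its Gysin pullback on Chow groups is defined by \cite{F}, Ch.~6.6.

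For the base case $n=0$: an integral $0$-dimensional $Z$ is $\mathrm{Spec}(L)$ for a finite field extension $L/k$, so $X\times Z=X_L$ and $Y\times Z=Y_L$, and the assertion is precisely Claim~1. For the inductive step, assume the result is known for all $Z$ of dimension $<n$. I would then write down the comparison of localization sequences: there is a commutative diagram whose rows are
$$\varinjlim_{W}CH^{i-c}(X\times W)\longrightarrow CH^{i}(X\times Z)\longrightarrow CH^{i}(X_{k(Z)})\longrightarrow 0$$
and the analogous row with $X$ replaced by $Y$, where $W$ ranges over the closed subvarieties $W\subsetneq Z$ and $c=\dim Z-\dim W$; exactness on the right is the description of the Chow group of the generic fibre as a colimit over dense opens, and exactness in the middle is the localization (excision) sequence. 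The vertical maps are the pullbacks along $f\times\mathrm{id}_W$, $f\times\mathrm{id}_Z$, and $f_{k(Z)}$ respectively. Now the rightmost vertical map $f_{k(Z)}^*$ is surjective by Claim~1 applied to $k(Z)/k$, and the leftmost vertical map is a filtered colimit of the maps $(f\times\mathrm{id}_W)^*$ with $\dim W\le n-1$, hence surjective by the inductive hypothesis; a diagram chase (four-lemma) then forces the middle vertical map $(f\times\mathrm{id}_Z)^*$ to be surjective, completing the induction.

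The step demanding the most care, and the one I would regard as the main obstacle, is the commutativity of the \emph{left} square of that diagram — that is, that l.c.i.\ Gysin pullback commutes with proper pushforward along $\mathrm{id}\times j_W$. Here I would invoke the compatibility of (refined) Gysin maps with proper pushforward in a Cartesian square, \cite{F}, Thm.~6.2, after noting that
$$\begin{CD}
Y\times W @>{\mathrm{id}_Y\times j_W}>> Y\times Z\\
@V{f\times\mathrm{id}_W}VV @VV{f\times\mathrm{id}_Z}V\\
X\times W @>{\mathrm{id}_X\times j_W}>> X\times Z
\end{CD}$$
is Cartesian with its two vertical arrows of equal relative dimension, so there is no excess-intersection term to contend with. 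The remaining verifications — that the horizontal maps are compatible with the colimit over $W$, and the codimension bookkeeping encoded in the shift $c$ — are straightforward and I would only sketch them.
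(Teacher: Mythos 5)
Your argument is essentially identical to the paper's: the same reduction to integral $Z$, the same remark that $f\times\mathrm{id}_Z$ is l.c.i.\ (so its Gysin pullback is defined even for singular $Z$ via \cite{F} Ch.~6.6), the same comparison of localization sequences, the same appeal to \cite{F} Thm.~6.2 for commutativity of the left square, and the same induction on $\dim Z$ with the generic-fibre map controlled by Claim~1. No substantive differences.
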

\begin{claim} $\h(f)$ possesses a right-inverse. 
\begin{proof}[Proof of Claim] Taking $Z = Y$ and $i = dim(Y)$ in the above claim, we obtain that $$CH^{i} (Y \times X) \xrightarrow{(id_{Y} \times f)^{*}} CH^{i} (Y \times Y)$$ is surjective. So, there is some $\gamma \in CH^{i} (X \times Y)$ for which $(id_{Y} \times f)^{*}\gamma = \Delta_{Y}$. Applying Liebermann's lemma (\cite{F} Proposition 16.1.1) then gives $$\h(f)\circ \gamma = (id_{Y} \times f)^{*}\gamma = \Delta.$$ This is the desired result.
\end{proof}\end{claim}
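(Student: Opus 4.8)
The plan is to establish Proposition~\ref{BK} --- that $\h(f)$ possesses a right inverse (equivalently, is split-surjective) --- by exhibiting an explicit correspondence $\gamma \in CH^{\dim Y}(Y \times X)$ with $\h(f)\circ\gamma = \Delta_{Y}$, the identity of $\h(Y)$ in $\M_{k}$. The surjectivity hypothesis is only assumed over algebraically closed fields, so I would bootstrap it in two stages before performing a ``diagonal trick'': first to all field extensions of $k$, then to the base changes $X \times Z \to Y \times Z$ along an arbitrary projective $Z$, and only then extract $\gamma$ by specializing to $Z = Y$.

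\emph{Step 1 (all field extensions).} I would first show that $f_{L}^{*}: CH^{i}(X_{L}) \to CH^{i}(Y_{L})$ is surjective for every extension $k \subset L$, not merely the algebraically closed ones. Given $\beta \in CH^{i}(Y_{L})$, pass to $\overline{L}$ to obtain a preimage $\overline{\alpha}$; since Chow groups with rational coefficients are unaffected by purely inseparable base change, $\overline{\alpha}$ already descends to a finite separable --- hence a finite Galois --- extension $L'/L$, say to $\alpha' \in CH^{i}(X_{L'})$. Averaging over $G = \mathrm{Gal}(L'/L)$ and invoking the projection formula for the finite flat map $X_{L'} \to X_{L}$ (which again needs rational coefficients, in order to divide by $|G|$), the class $\tfrac{1}{|G|}\sum_{g \in G} g^{*}\alpha'$ descends to some $\alpha \in CH^{i}(X_{L})$; one then checks that $f_{L}^{*}(\alpha)$ and $\beta$ become equal over $L'$ and concludes by injectivity of $CH^{i}(X_{L}) \to CH^{i}(X_{L'})$, itself a transfer argument with $\mathbb{Q}$-coefficients.

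\emph{Step 2 (products).} Next I would prove that $(f \times id_{Z})^{*}: CH^{i}(X \times Z) \to CH^{i}(Y \times Z)$ is surjective for all projective $k$-varieties $Z$ and all $i$, by induction on $n = \dim Z$ (after reducing to $Z$ connected). The decisive structural fact is that $f$, having smooth source and target, is a local complete intersection morphism, and hence so is $f \times id_{Z}$: this lets Fulton's refined Gysin machinery define $(f \times id_{Z})^{*}$ even when $Z$ is singular, and makes it commute with proper pushforward along the closed immersions $id \times j_{W}$ in the evident Cartesian squares of constant relative codimension. Taking the colimit over proper closed subvarieties $W \subsetneq Z$, the localization sequence then yields a commutative ladder with right-exact rows whose rightmost terms are $CH^{i}(X_{k(Z)}) \to CH^{i}(Y_{k(Z)})$ --- surjective by Step~1, as $k(Z)/k$ is a field extension --- and whose leftmost terms are a colimit of the maps $CH^{i-c}(X \times W) \to CH^{i-c}(Y \times W)$ with $c = \dim Z - \dim W$, surjective by the inductive hypothesis since $\dim W < n$; a diagram chase completes the induction, the base case $n = 0$ being Step~1.

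\emph{Step 3 (diagonal trick).} Finally, applying Step~2 with $Z = Y$ in codimension $\dim Y$ shows that $(id_{Y} \times f)^{*}: CH^{\dim Y}(Y \times X) \to CH^{\dim Y}(Y \times Y)$ is surjective, so there is a $\gamma$ with $(id_{Y} \times f)^{*}\gamma = \Delta_{Y}$. By Liebermann's lemma (\cite{F} Proposition~16.1.1), $(id_{Y} \times f)^{*}\gamma = \h(f)\circ\gamma$, whence $\h(f)\circ\gamma = \Delta_{Y}$ in $\M_{k}$ and $\h(f)$ is split-surjective. I expect Step~2 to be the main obstacle: one must justify with care that $(f \times id_{Z})^{*}$ is well defined when $Z$ is singular --- exactly where the l.c.i.\ hypothesis enters --- and, more delicately, that refined Gysin pullback commutes with proper pushforward in the relevant Cartesian squares, which is what makes the left square of the ladder commute. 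By contrast, Steps~1 and~3 are essentially formal once the correct descent, transfer, and composition-of-correspondences statements are assembled.
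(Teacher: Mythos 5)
Your Step 3 is exactly the paper's argument for this claim: specialize the product surjectivity to $Z=Y$ in codimension $\dim Y$, produce $\gamma \in CH^{\dim Y}(Y\times X)$ with $(id_Y\times f)^*\gamma = \Delta_Y$, and invoke Liebermann's lemma to identify $(id_Y\times f)^*\gamma$ with $\h(f)\circ\gamma$. (You even silently correct a typo in the paper, which writes $\gamma\in CH^i(X\times Y)$ where it should be $CH^i(Y\times X)$.)
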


\section{Proof of Theorem \ref{1}}

Assume that $S$ has a $k$-rational point and let $i: S \to A$ be the corresponding Albanese morphism. To prove the theorem, we need to show that the correspondence $\prescript{t}{}{\Gamma_{i}} \in Cor^{0} (A \times S)$ possesses a right-inverse.  The general principle then shows that it suffices to prove that the pull-back $i_{\Omega}^{*} : CH^{j} (A_{\Omega}) \to CH^{j} (S_{\Omega})$ is surjective for $j = 1, 2$ and all algebraically closed extensions $k \subset \Omega$. \\
 \indent For a smooth cubic threefold $X$ over $k$, $S$ is the Hilbert scheme of $X$ with Hilbert polynomial $t+1$. We recall the following base change compatibility for Hilbert schemes (see, for instance, \cite{N}); i.e., for an extension $k \subset \Omega$, we have
 $$S(X_{\Omega}) \cong S(X)_{\Omega}$$ 
Thus, $S_{\Omega}$ is the Fano surface of lines of $X_{\Omega}$. Moreover, since the Albanese is compatible with base extension, we view $i_{\Omega}: S_{\Omega} \to A_{\Omega}$ as an Albanese map.

\begin{Prop}\label{begin} For all algebraically closed extensions $k \subset \Omega$, the pull-back $i_{\Omega}^*: CH^{1} (A_{\Omega}) \to CH^{1} (S_{\Omega})$ is an isomorphism.
\begin{proof} Since $i_{\Omega}: S_{\Omega} \to A_{\Omega}$ is an Albanese map, we have $Pic^{0} (A_{\Omega}) \xrightarrow{i_{\Omega}^*} Pic^{0} (S_{\Omega})$ is an isomorphism, and so it suffices to show that we have an isomorphism $i_{\Omega}^*: NS (A_{\Omega})_{\qit} \to NS (S_{\Omega})_{\qit}$ of N{\'e}ron-Severi groups. Now, let $\ell \neq \text{char } k$. Then, $H^{2} (A_{\Omega}, \qit_{\ell} (1)) \xrightarrow{i_{\Omega}^*} H^{2} (S_{\Omega}, \qit_{\ell} (1))$ is an isomorphism by \cite{R} Proposition 4. Note that $S$ and $A$ may be defined over some finitely generated field $k_{0}$. Upon passing to a large enough extension of ${k_{0}}$, we may also assume that $C$ possesses a model over ${k_{0}}$ and that $NS(C_{k_{0}} \times A_{k_{0}})_{\qit} \cong NS(C \times A)_{\qit}$ (and, similarly, for $C \times S$). Note that this is possible because the N{\'e}ron-Severi group is finitely generated. Thus, we need to show that $$NS(C_{k_{0}} \times A_{k_{0}})_{\qit} \xrightarrow[\cong]{(id_{C} \times i)^*} NS(C_{k_{0}} \times S_{k_{0}})_{\qit}$$  For this, let $G := Gal(k/k_{0})$ be the absolute Galois group and $\ell \neq \text{char } k$. The K{\"u}nneth theorem on cohomology then gives a ($G$-module) isomorphism:
\begin{equation} H^{2} (C \times A, \Q_{\ell}(1)) \xrightarrow[\cong]{(id_{C} \times i )^*} H^{2} (C \times S, \Q_{\ell}(1)) \label{H^{2}} \end{equation} The result then follows from applying the functor $H^{0} (G, -)$ to (\ref{H^{2}}) and noting that the Tate conjecture holds for the left-hand side (by Faltings' theorem and the fact that $k_{0}$ is a finitely generated field).  \end{proof} 
\end{Prop}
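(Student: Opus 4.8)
The plan is to separate the continuous and discrete parts of the Picard group: the continuous part is governed directly by the universal property of the Albanese, while the discrete part reduces to the Tate conjecture for divisors on an abelian variety. Throughout, write $CH^{1}(V_{\Omega}) = Pic(V_{\Omega})_{\qit}$, sitting in the split exact sequence $0 \to Pic^{0}(V_{\Omega})_{\qit} \to CH^{1}(V_{\Omega}) \to NS(V_{\Omega})_{\qit} \to 0$. Since $i_{\Omega} : S_{\Omega} \to A_{\Omega}$ is an Albanese map, the map it induces on Picard varieties, $Pic^{0}(A_{\Omega}) \to Pic^{0}(S_{\Omega})$, is an isomorphism of abelian varieties: it is obtained by dualizing $id_{A}$ under the identification of $Pic^{0}(S)$ with the dual of $Alb(S) = A$. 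Hence it suffices to prove that $i_{\Omega}^{*} : NS(A_{\Omega})_{\qit} \to NS(S_{\Omega})_{\qit}$ is an isomorphism.

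For this I would pass to $\ell$-adic cohomology, $\ell \neq \mathrm{char}\, k$, and take as an input — citing \cite{R} rather than reproving it — that $i_{\Omega}^{*} : H^{2}(A_{\Omega}, \qit_{\ell}(1)) \to H^{2}(S_{\Omega}, \qit_{\ell}(1))$ is an isomorphism for the Fano surface; morally this expresses $H^{2}(S_{\Omega}) = \Lambda^{2} H^{1}(S_{\Omega})$ together with the fact that $i_{\Omega}^{*}$ is an isomorphism on $H^{1}$. Since the cycle class maps $NS(\cdot)_{\qit} \hookrightarrow H^{2}(\cdot, \qit_{\ell}(1))$ are injective and compatible with pull-back, injectivity of $i_{\Omega}^{*}$ on $NS_{\qit}$ is then immediate, and only surjectivity remains.

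For surjectivity I would descend to a finitely generated field and invoke the Tate conjecture. The data $S$, $A$, the morphism $i$, and a prescribed class $\xi \in NS(S_{\Omega})_{\qit}$ are all defined over some finitely generated subfield $k_{0} \subset \Omega$, and since $NS$ is finitely generated and unchanged under extension of algebraically closed fields, it is enough to produce an algebraic class on $A_{\overline{k_{0}}}$ restricting to $\xi$. Put $\eta = (i_{\overline{k_{0}}}^{*})^{-1}(\mathrm{cl}_{\ell}(\xi)) \in H^{2}(A_{\overline{k_{0}}}, \qit_{\ell}(1))$; it is $Gal(\overline{k_{0}}/k_{0})$-invariant because $\mathrm{cl}_{\ell}(\xi)$ is and $i$ is defined over $k_{0}$. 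The Tate conjecture for divisors on the abelian variety $A$ over the finitely generated field $k_{0}$ — which holds by Faltings' theorem in characteristic zero and by Zarhin and Mori in positive characteristic — then gives $\eta \in NS(A_{\overline{k_{0}}})_{\qit}$, and a small linear-algebra argument (using injectivity of $i^{*}$ on $NS_{\qit}$) shows $i^{*}\eta = \xi$. I expect the only genuinely deep point to be this appeal to the Tate conjecture, which is however available in exactly the needed generality; the rest is formal once the cohomological comparison is granted, the one thing requiring care being the bookkeeping of a model over $k_{0}$ over which $i$ and the comparison isomorphism are defined, together with checking Galois-equivariance. A technically convenient variant — which also yields the more robust statement $(id_{C} \times i)^{*} : NS(C \times A)_{\qit} \xrightarrow{\cong} NS(C \times S)_{\qit}$ for an auxiliary curve $C$ — is to run the same descent on $H^{2}(C \times A, \qit_{\ell}(1))$, where the Künneth decomposition cuts the Tate input down to statements about homomorphisms of abelian varieties, again covered by Faltings.
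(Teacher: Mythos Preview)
Your proposal is correct and follows essentially the same route as the paper: split $CH^{1}$ into $Pic^{0}$ and $NS_{\qit}$, dispose of the former via the Albanese universal property, cite \cite{R} for the cohomological isomorphism on $H^{2}$, descend to a finitely generated field, and invoke the Tate conjecture for divisors on the abelian variety. You are in fact slightly more careful than the paper in one respect --- you note that in positive characteristic the relevant Tate-type input is due to Zarhin/Mori rather than Faltings --- and your closing remark about the $C\times(-)$ variant via K\"unneth is exactly the form in which the paper phrases the descent step.
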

We have the following important result of Bloch:
\begin{Prop}[(\cite{BL} 1.7)]\label{Bloch} Let $S$ be the Fano surface of lines of a smooth cubic threefold in $\pit^4$ over an algebraically closed field of characteristic $\neq 2$. Then, the intersection product $CH^{1} (S) \otimes CH^{1} (S) \xrightarrow{\cdot} CH^{2} (S)$ is surjective.
\end{Prop}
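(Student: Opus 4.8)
My plan is to prove that products of divisor classes span $CH^{2}(S)=CH_{0}(S)$ by combining the incidence divisors on $S$ with the abundance of rational curves on the cubic threefold $X$.

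\emph{Reduction.} Since $k$ is algebraically closed, $CH_{0}(S)$ (with rational coefficients) is generated by classes of closed points, and the image $R$ of the intersection product is a $\mathbb{Q}$-subspace of $CH^{2}(S)$; moreover $R$ already contains a $0$-cycle of nonzero degree, namely $D_{\ell}\cdot D_{\ell}$, of degree $5$, where for a line $\ell\subset X$ we write $D_{\ell}=\{m\in S:\ m\cap\ell\neq\emptyset\}$ for the incidence divisor and $\delta=[D_{\ell}]$ (an ample class, independent of $\ell$ since the $D_{\ell}$ move in an algebraic family over the connected $S$). Hence it suffices to show that $[s]-[s']\in R$ for any two closed points $s,s'$ of $S$, i.e.\ that $[\ell]-[\ell']$ is a $\mathbb{Q}$-combination of intersections of divisors on $S$, where $\ell,\ell'$ are the corresponding lines. (By Proposition~\ref{begin} every divisor class on $S$ is pulled back from $A$, so one has $R=i^{*}\bigl(\mathrm{Im}(CH^{1}(A)^{\otimes 2}\to CH^{2}(A))\bigr)$; but I would argue geometrically on $S$ rather than try to control products of divisors on the abelian variety $A$ directly.)

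\emph{Two inputs.} First, a universal relation among lines: through a general point $x\in X$ there are exactly six lines $\ell_{1}(x),\dots,\ell_{6}(x)$ of $X$, and since $X$ is unirational, hence rationally connected, the assignment $x\mapsto\sum_{i}[\ell_{i}(x)]$ extends (it is the fibrewise pull-back along the finite map $L\to X$ from the universal line) to a morphism from $X$, off a closed subset of codimension $\geq 2$ (the Eckardt locus), to $\Sym^{6}S$; joining two general points of $X$ by a chain of rational curves chosen to miss that locus then produces a chain of rational equivalences of the corresponding $0$-cycles on $S$, so $\sum_{i}[\ell_{i}(x)]=c_{0}$ for a fixed class $c_{0}$ and $x$ general. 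Second, the incidence divisors supply a complementary family of $0$-cycles inside $R$: for general $\ell,\ell'$ the cycle $D_{\ell}\cdot D_{\ell'}$ is the sum of the five common transversals of $\ell$ and $\ell'$, and when $\ell'$ is specialised so that $\ell\cap\ell'\neq\emptyset$ this cycle degenerates into the class of the residual line $n$ of the triangle $\ell\cup\ell'\cup n=\langle\ell,\ell'\rangle\cap X$ plus a correction supported on $D_{\ell}$, whose shape can be read off from the conic-bundle structure of $\mathrm{Bl}_{\ell}X\to\pit^{2}$ (projection away from $\ell$), whose discriminant quintic carries an \'etale double cover isomorphic to $D_{\ell}$ with Prym variety $A=J(X)$.

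\emph{Putting it together, and the obstacle.} Running the point $x$ over a fixed line $\ell$ in the universal relation, the five lines $\neq\ell$ through $x$ trace out the five-sheeted cover $D_{\ell}\to\ell$; combining this with the relation above and with the cycles $D_{\ell}\cdot D_{\ell'}\in R$ should isolate $[\ell]$ modulo $R$ in terms of classes of lines meeting $\ell$, and iterating—using that $S$ is connected—would give $[\ell]-[\ell']\in R$ for general $\ell,\ell'$, hence for all of them. Since one also has $\mathrm{alb}_{S}(R)=A$ (because $\mathrm{alb}_{S}(D_{\ell}\cdot D_{\ell'})$ varies non-constantly with $(\ell,\ell')$), this would yield $R=CH^{2}(S)$. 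I expect the last step to be the real difficulty: converting the global relation $\sum_{i}[\ell_{i}(x)]=c_{0}$ into control of an arbitrary \emph{difference} of points, equivalently showing that the kernel of the Albanese map on degree-zero $0$-cycles is contained in $R$. This is exactly where the special geometry of the cubic threefold—the Prym/conic-bundle description of the $D_{\ell}$, and the fact that it recovers the intermediate Jacobian—has to be used in an essential way; a formal argument using only the surface $S$ will not suffice.
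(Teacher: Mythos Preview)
The paper gives no proof of this proposition; it is quoted from Bloch \cite{BL} without argument, so the comparison is with Bloch's original rather than with anything written in the paper.

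Your sketch assembles the right ingredients---the incidence divisors $D_\ell$, the relation $\sum_i[\ell_i(x)]=c_0$ coming from $CH_0(X)_{\qit}=\qit$, and the degenerate intersections $D_\ell\cdot D_{\ell'}$ for incident lines---and these are exactly what Bloch uses. But you stop at the decisive computation and then overestimate what remains: closing the gap needs neither the Prym description nor any separate control of the Albanese kernel. For a general triangle $\ell\cup\ell'\cup\ell''=\Pi\cap X$ with $\ell\cap\ell'=\{x\}$, the lines of $X$ meeting both $\ell$ and $\ell'$ are $\ell''$ together with the four lines through $x$ other than $\ell,\ell'$, so
\[
D_\ell\cdot D_{\ell'}=[\ell'']+\bigl(c_0-[\ell]-[\ell']\bigr),
\]
whence $[\ell]+[\ell']-[\ell'']\equiv c_0\pmod R$. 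The pair $(\ell,\ell'')$ is equally incident, with residual $\ell'$, giving $[\ell]+[\ell'']-[\ell']\equiv c_0\pmod R$; adding the two relations yields $2[\ell]\equiv 2c_0$, hence $[\ell]\equiv c_0\pmod R$ for every (general, and by specialisation every) line $\ell$. Thus all differences $[\ell]-[\ell']$ lie in $R$, and since $R$ already contains a $0$-cycle of nonzero degree, $R=CH^2(S)$. This is essentially Bloch's argument; what your outline is missing is only this short triangle symmetry, not a new geometric input, and the anticipated difficulty in your last paragraph does not materialise.
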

\begin{Prop}\label{Bloch-lemma} For all algebraically closed extensions $k \subset \Omega$, the pullback $CH^{2} (A_{\Omega}) \xrightarrow{i_{\Omega}^{*}} CH^{2} (S_{\Omega})$ is surjective.
\begin{proof} Since pull-back commutes with intersection product on Chow groups, we have the following diagram:
\begin{equation}
\begin{CD}
CH^{1} (A_{\Omega}) \otimes CH^{1} (A_{\Omega}) @>\cdot>> CH^{2} (A_{\Omega})  \\
@Vi_{\Omega}^{*} \times i_{\Omega}^{*} VV @Vi_{\Omega}^{*}VV   \\
CH^{1} (S_{\Omega}) \otimes CH^{1} (S_{\Omega})  @>\cdot>> CH^{2} (S_{\Omega}) 
\end{CD}
\end{equation}
From Proposition \ref{begin}, the left vertical arrow is surjective. Since $\Omega$ is algebraically closed and $S_{\Omega}$ is the Fano surface of lines of $X_{\Omega}$, we can apply Proposition \ref{Bloch} to deduce that the bottom horizontal arrow is also surjective.. So, the right vertical arrow is surjective, as desired.
\end{proof} 
\end{Prop}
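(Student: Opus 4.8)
The plan is to obtain the codimension-two surjectivity purely formally from the codimension-one statement already proved and Bloch's theorem on the Fano surface, using that pullback of cycles along a morphism of smooth projective varieties is multiplicative. Since $S_{\Omega}$ is smooth projective, $A_{\Omega} = Alb(S_{\Omega})$ is an abelian variety, and (the Albanese being compatible with base change) $i_{\Omega} : S_{\Omega} \to A_{\Omega}$ is a morphism of smooth projective varieties, the pullback $i_{\Omega}^{*} : CH^{*}(A_{\Omega}) \to CH^{*}(S_{\Omega})$ is a homomorphism of graded rings; in particular it carries the intersection pairing $CH^{1} \otimes CH^{1} \to CH^{2}$ on $A_{\Omega}$ to the one on $S_{\Omega}$. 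First I would record this as a commutative square whose two horizontal arrows are these intersection pairings and whose left and right vertical arrows are $i_{\Omega}^{*} \otimes i_{\Omega}^{*}$ and $i_{\Omega}^{*}$.

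Next I would feed in the two surjectivity inputs. On the left, Proposition \ref{begin} gives that $i_{\Omega}^{*} : CH^{1}(A_{\Omega}) \to CH^{1}(S_{\Omega})$ is an isomorphism, so the left vertical arrow is surjective (indeed an isomorphism). Along the bottom I want to invoke Proposition \ref{Bloch}, for which I must check that its hypotheses survive the scalar extension: that $S_{\Omega}$ is genuinely the Fano surface of lines of a smooth cubic threefold in $\pit^{4}$ over an algebraically closed field of characteristic $\neq 2$. This is exactly the base-change compatibility $S_{\Omega} \cong S(X_{\Omega})$ of Hilbert schemes recalled above (with $X_{\Omega}$ still smooth), together with the standing hypothesis $\text{char } k \neq 2$ and the assumption that $\Omega$ is algebraically closed. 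Granting this, Bloch's theorem makes the bottom arrow $CH^{1}(S_{\Omega}) \otimes CH^{1}(S_{\Omega}) \to CH^{2}(S_{\Omega})$ surjective.

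The proposition then follows by a one-line diagram chase: the composite $CH^{1}(A_{\Omega}) \otimes CH^{1}(A_{\Omega}) \to CH^{1}(S_{\Omega}) \otimes CH^{1}(S_{\Omega}) \to CH^{2}(S_{\Omega})$ is a composition of surjections, hence surjective; by commutativity of the square it coincides with the composite $CH^{1}(A_{\Omega}) \otimes CH^{1}(A_{\Omega}) \to CH^{2}(A_{\Omega}) \xrightarrow{i_{\Omega}^{*}} CH^{2}(S_{\Omega})$, so the image of $i_{\Omega}^{*}$ on $CH^{2}$ already exhausts $CH^{2}(S_{\Omega})$. I do not anticipate a genuine obstacle here: once Proposition \ref{begin} and Proposition \ref{Bloch} are available the argument is purely formal, and the only point requiring care is the bookkeeping ensuring that passing to the algebraically closed extension $\Omega$ preserves the structure needed to apply Bloch's theorem.
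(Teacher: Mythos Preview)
Your proposal is correct and follows essentially the same argument as the paper: set up the commutative square coming from multiplicativity of pullback, use Proposition~\ref{begin} for surjectivity on the left and Proposition~\ref{Bloch} for surjectivity along the bottom, and conclude by a diagram chase. Your extra care in verifying that $S_{\Omega}$ is the Fano surface of $X_{\Omega}$ so that Bloch's theorem applies is exactly what the paper records just before the proposition.
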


\section{Finite-dimensionality of Motives}

This section reviews the definition and properties of finite-dimensionality before proving Theorem \ref{2}.
Recall that the category of Chow motives $\M_{k}$ is a tensor category with tensor product defined as:
\begin{equation}
(X, \pi, m)  \otimes  (Y, \tau, n)  :=  (X \times Y, \pi \times \tau, m+n)
\end{equation}
We can define an action of the symmetric group $\Q[\S_{n}] \to End_{\M_{k}} (M^{\otimes n})$ for $M \in \M_{k}$. Since $\M_{k}$ is a pseudo-Abelian category, all idempotents possess images in $\M_{k}$. So, for any idempotent in the group algebra $\Q[\S_{n}]$, there is a corresponding motive. In particular, we have \begin{equation}\begin{split} \Sym^{n} M = \text{Im}(\pi_{sym})\\ \wedge^{n} M = \text{Im}(\pi_{alt})\end{split}\end{equation}
for the symmetric and the alternating representation of $\S_{n}$. 
\begin{Def}[(Kimura)] A motive $M \in M_{k}$ is said to be oddly finite-dimensional if $\Sym^{n} M = 0$ for $n >> 0$ and evenly finite-dimensional if $\wedge^{n} M = 0$ for $n >> 0$. $M$ is said to be finite-dimensional if $M = M_{+} \oplus M_{-}$, where $M_{+}$ is evenly finite-dimensional and $M_{-}$ is oddly finite-dimensional. \end{Def}
We have the following properties of finite-dimensional motives:
\begin{Thm}[(Kimura, \cite{K})]\label{fin-dim} \item \begin{enumerate}[label = (\alph*)] 
\item\label{itm: curve} The motive of a smooth projective curve is finite-dimensional. 
\item\label{itm: otimes} If $M, N \in \M_{k}$ are finite-dimensional, then so are $M \oplus N$ and $M \otimes N$. Conversely, if $M \oplus N$ is finite-dimensional, then so are $M$ and $N$.
\item\label{itm: dom} If $f: M \to N$ is split-surjective and $M$ is finite-dimensional, then so is $N$. 
\item\label{itm: parity} If $M$ is finite-dimensional and the odd degree cohomology $H^{-} (M) = 0$ for some Weil cohomology $H^{*}$, then $M$ is evenly finite-dimensional (and, similarly for oddly finite-dimensionality). 
 \item\label{itm: cohom} Suppose $M$ is finite-dimensional and $\Phi \in \End_{\M_{k}} (M)$ is such that $\Phi_{*} \in \End (H^{*} (M))$ is an isomorphism. Then, $\Phi$ is an isomorphism. \end{enumerate}
\end{Thm}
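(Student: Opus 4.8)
The strategy is to reduce all five assertions to two inputs: the Chow--K\"unneth decomposition of curves together with the geometry of symmetric products (for \ref{itm: curve}), and Kimura's \emph{nilpotence principle} --- that if $M$ is finite-dimensional then the two-sided ideal $\N(M) := \ker\!\big(\End_{\M_{k}}(M)\to\End(H^{*}(M))\big)$ is nilpotent, with index of nilpotency bounded in terms of $M$ (for \ref{itm: otimes}--\ref{itm: cohom}). The bookkeeping uses the Schur-functor calculus: the Cauchy decompositions $\wedge^{n}(P\otimes Q)=\bigoplus_{|\lambda|=n}S_{\lambda}(P)\otimes S_{\lambda'}(Q)$ and $\Sym^{n}(P\otimes Q)=\bigoplus_{|\lambda|=n}S_{\lambda}(P)\otimes S_{\lambda}(Q)$, and the fact that $S_{\lambda}(M)=0$ once $\lambda$ has more rows than the evenly finite-dimensional $M$ permits (resp.\ more columns than the oddly finite-dimensional $M$ permits), which itself follows by realizing $S_{\lambda}(M)$ as a summand of a tensor product of the $\wedge^{\lambda'_{i}}(M)$ (resp.\ the $\Sym^{\lambda_{i}}(M)$).

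For \ref{itm: curve}, after a harmless finite extension giving $C$ a rational point, write $\h(C)=\mathbf{1}\oplus\h^{1}(C)\oplus\mathbb{L}$. As $\wedge^{2}$ of a $\otimes$-invertible motive vanishes, $\mathbf{1}$ and $\mathbb{L}$ are evenly finite-dimensional, so by \ref{itm: otimes} it suffices to show $\h^{1}(C)$ is oddly finite-dimensional, i.e.\ $\Sym^{n}\h^{1}(C)=0$ for $n>2g$. Taking $\S_{n}$-invariants in $\h(C^{n})=\h(C)^{\otimes n}$ identifies $\h(C^{(n)})$ with $\bigoplus_{b+c\le n}\Sym^{b}\h^{1}(C)\otimes\mathbb{L}^{\otimes c}$; meanwhile, for $n\ge 2g-1$ the Abel--Jacobi map $C^{(n)}\to\operatorname{Pic}^{n}(C)\cong J$ is a projective bundle, so $\h(C^{(n)})=\bigoplus_{j=0}^{n-g}\h(J)\otimes\mathbb{L}^{\otimes j}$ involves only the single fixed motive $\h(J)$. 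Comparing these two descriptions across consecutive $n$ --- the place where care is needed, since $\M_{k}$ need not have cancellation --- exhibits $\Sym^{n+1}\h^{1}(C)$ as a summand of a motive with vanishing cohomology, which the nilpotence principle then forces to be $0$.

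The heart of the matter is the nilpotence principle. I would treat $M$ evenly finite-dimensional, $\wedge^{n+1}M=0$, first: for $f\in\N(M)$ the categorical (Lefschetz) trace gives $\operatorname{tr}(f^{j})=\sum_{i}(-1)^{i}\operatorname{tr}\!\big((f_{*})^{j}\mid H^{i}(M)\big)=0$ for all $j\ge 1$, so by Newton's identities in characteristic $0$ every $e_{k}=\operatorname{tr}(\wedge^{k}f)$ with $k\ge 1$ vanishes, and the Cayley--Hamilton relation available because $\wedge^{n+1}M=0$ reads $f^{n+1}=e_{1}f^{n}-e_{2}f^{n-1}+\cdots=0$. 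The oddly finite-dimensional case is symmetric (using $\Sym^{n+1}M=0$ and complete-homogeneous traces), and for general $M=M_{+}\oplus M_{-}$ one writes $f$ in block form, notes that $f_{*}=0$ forces each diagonal block into $\N(M_{\pm})$ and each product of the two off-diagonal blocks into $\N(M_{+})$ or $\N(M_{-})$, and concludes by a routine block-matrix computation; tracking the exponents gives the uniform bound.

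The rest is formal. For \ref{itm: otimes}: the Cauchy formulas plus the Schur-functor vanishing immediately give that $P\otimes Q$ is evenly finite-dimensional when $P,Q$ are both evenly or both oddly finite-dimensional, and oddly finite-dimensional when one factor is even and one odd; combined with the obvious additivity of $\wedge^{\bullet}$ and $\Sym^{\bullet}$ over direct sums, this yields the statement for $M\oplus N$ and $M\otimes N$. For the converse, a direct summand $N$ of an evenly (resp.\ oddly) finite-dimensional $P$ is again such, since $\wedge^{n+1}N$ (resp.\ $\Sym^{n+1}N$) is a summand of $\wedge^{n+1}P=0$; for a summand $N$ of a general finite-dimensional $P=P_{+}\oplus P_{-}$, the idempotent cutting out $N$ has cohomologically block-diagonal realization --- a morphism of motives preserves the parity of cohomological degree, so it cannot mix $H^{+}(P_{+})$ with $H^{-}(P_{-})$ --- hence its off-diagonal corners lie in $\N(P_{+})$, $\N(P_{-})$ and are nilpotent, so it can be conjugated to block-diagonal form and $N$ splits as a summand of $P_{+}$ plus a summand of $P_{-}$. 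Statement \ref{itm: dom} is then immediate: a split surjection $M\to N$ realizes $N$ as a retract of $M$, which is finite-dimensional by \ref{itm: otimes}. For \ref{itm: parity}, an oddly finite-dimensional motive has no even cohomology (since $\Sym^{n}$ of a nonzero even graded vector space never vanishes); thus if $H^{-}(M)=0$ the summand $M_{-}$ has $H^{*}(M_{-})=0$, so $\operatorname{id}_{M_{-}}\in\N(M_{-})$ is nilpotent and $M_{-}=0$. Finally, for \ref{itm: cohom}: the characteristic polynomial $p(t)=t^{d}+\cdots+a_{0}$ of the isomorphism $\Phi_{*}$ on the finite-dimensional space $H^{*}(M)$ has $a_{0}=\pm\det(\Phi_{*})\ne 0$, while $p(\Phi)\in\N(M)$ is nilpotent; writing $p(\Phi)=\Phi\cdot q(\Phi)+a_{0}$ shows $\Phi\cdot q(\Phi)=p(\Phi)-a_{0}$ is invertible (a nilpotent minus a nonzero scalar), so $\Phi$ has a two-sided inverse. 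The principal obstacle throughout is the nilpotence principle of the third paragraph; everything else is trace calculus, Schur-functor combinatorics, and the one geometric input for curves.
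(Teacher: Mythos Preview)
The paper does not actually prove this theorem: it is quoted with attribution to Kimura \cite{K} and used as a black box in the rest of the argument, so there is no proof in the paper to compare your sketch against. That said, your outline is a faithful summary of the standard arguments, and your treatment of \ref{itm: otimes}--\ref{itm: cohom} via Schur-functor combinatorics and the nilpotence principle is correct.

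There is, however, a genuine circularity in your argument for \ref{itm: curve}. You propose to conclude $\Sym^{n+1}\h^{1}(C)=0$ by exhibiting it as a summand of a motive with vanishing cohomology and then invoking the nilpotence principle; but nilpotence is only available for motives already known to be finite-dimensional, which is exactly what you are trying to prove. Since in the usual logical order one deduces finite-dimensionality of Abelian varieties \emph{from} that of curves (via \ref{itm: curve}, \ref{itm: otimes}, \ref{itm: dom}), you cannot appeal to finite-dimensionality of $\h(J)$ or $\h(C^{(n)})$ at this point without further comment. There are two clean fixes. One is Kimura's own route: a direct comparison of the motives $\h(C^{(n)})$ for consecutive $n$ via the explicit correspondences coming from the addition map $C\times C^{(n)}\to C^{(n+1)}$ and the projective-bundle formula, which yields the vanishing by an honest isomorphism of motives rather than by nilpotence. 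The other is to reverse the logical order: prove finite-dimensionality of Abelian varieties first and independently, using the Deninger--Murre/K\"unnemann identity $\Sym^{n}\h^{1}(A)\cong\h^{n}(A)$ (hence $\Sym^{2g+1}\h^{1}(A)=0$), and then deduce \ref{itm: curve} immediately from the isomorphism $\h^{1}(C)\cong\h^{1}(J)$. Either route closes the gap; as written, your sketch does not.
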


As a consequence of \ref{itm: curve}, \ref{itm: otimes}, the motive of any product of smooth projective curves is finite-dimensional; from \ref{itm: dom}, so is any variety dominated by a product of curves (such as Abelian varieties). In \cite{GG} it is demonstrated that varieties of dimension $\leq 3$ have finite-dimensional motive if $CH_{0} (X)_{hom}$ is representable. (This is true, in particular, if $X$ is a rationally connected threefold.) Other than this, the following conjecture remains wide open.
\begin{Conj}[(Kimura, O'Sullivan)] Every motive $M \in \M_{k}$ is finite-dimensional.
\end{Conj}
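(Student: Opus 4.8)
This is the Kimura--O'Sullivan conjecture, one of the central open problems in the theory of motives; no proof is known, and what follows is only an account of the standard reductions and of the point at which every attempt currently founders.

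\emph{Reduction to a single variety.} Every object of $\M_{k}$ is, up to a Tate twist, a direct summand of $\h(X)$ for some smooth projective $X$, and finite-dimensionality is stable under direct summands, sums, twists and tensor products by part \ref{itm: otimes} of Theorem \ref{fin-dim}; so the conjecture reduces to showing $\h(X)$ finite-dimensional for every smooth projective $X$. For curves this is Theorem \ref{fin-dim}\ref{itm: curve}; hence, by parts \ref{itm: dom} and \ref{itm: otimes} of Theorem \ref{fin-dim}, it holds for every variety dominated by a product of curves --- in particular for every abelian variety, each being a quotient of a Jacobian and therefore dominated by a power of a curve. The next step would be an induction on $\dim X$: for surfaces, Murre's Chow--K{\"u}nneth decomposition is available unconditionally and splits off $\h^{0}$, $\h^{1}$, $\h^{3}$ (a Tate twist of $\h^{1}$) and $\h^{4}$, all finite-dimensional, leaving only the transcendental motive $\h^{2}_{\mathrm{tr}}(X)$; in higher dimensions the analogous d{\'e}vissage onto the transcendental pieces in the middle range requires the standard conjectures. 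By Theorem \ref{fin-dim}\ref{itm: parity} it would then suffice to prove $\h^{2}_{\mathrm{tr}}(X)$ evenly finite-dimensional.

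\emph{The case of surfaces.} If $p_{g}(X) = 0$ then finite-dimensionality of $\h^{2}_{\mathrm{tr}}(X)$ is equivalent to its vanishing, which is Bloch's conjecture and open in general. If $p_{g}(X) > 0$ one cannot expect vanishing, and the only mechanisms presently known for establishing finite-dimensionality are: (i) exhibiting a dominant rational map from a product of curves, whence $\h(X)$ is a summand of an abelian-type motive; or (ii) constructing, via the surjectivity criterion of Proposition \ref{BK}, a split surjection $\h(A) \vdash \h(X)$ from the motive of an abelian variety $A$. It bears emphasizing that the Fano surface treated in this note is \emph{not} covered by (i) --- it is not dominated by a product of curves --- and yet is covered by (ii); so even this example supplies no source of finite-dimensionality beyond ``direct summand of an abelian variety.'' For a general surface of general type with $p_{g} > 0$ neither (i) nor (ii) applies, and there is no candidate substitute.

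\emph{An equivalent reformulation, and the obstacle.} In parallel I would pursue the equivalence with Voevodsky's nilpotence conjecture: by Kimura's analysis the ideal of numerically trivial endomorphisms of a finite-dimensional motive is nilpotent, and conversely, granting the sign conjecture --- so that numerical motives, already semisimple by Jannsen's theorem, are finite-dimensional --- nilpotence of all numerically trivial self-correspondences of all smooth projective varieties would force every Chow motive to be finite-dimensional. One could therefore instead try to prove the statement, local in $X$, that every self-correspondence of $X$ numerically equivalent to zero is nilpotent. Whichever route one takes, the difficulty is the same, and it is structural: every known proof of finite-dimensionality ultimately invokes Theorem \ref{fin-dim}\ref{itm: curve} and propagates it through parts \ref{itm: otimes} and \ref{itm: dom}, or else --- as in the present note --- through an explicit geometric splitting onto an abelian variety, and at present there is no mechanism that manufactures finite-dimensionality for a motive with no handle on curves. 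Producing one would require a genuinely new ingredient, for instance a proof that some Weil realization functor is conservative on $\M_{k}$, or the construction of a Bloch--Beilinson filtration with its expected formal properties. I expect this --- and not any of the reduction steps above --- to be where the attempt stalls; it is, in effect, the reason the conjecture is still open.
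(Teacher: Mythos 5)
The statement you were asked about is the Kimura--O'Sullivan \emph{conjecture}, which the paper explicitly presents as ``wide open'' and does not attempt to prove; there is therefore no proof in the paper to compare against. You correctly recognized this and, rather than fabricating an argument, gave an accurate survey of the standard reductions (to $\h(X)$ for $X$ smooth projective, to the transcendental part $\h^{2}_{\mathrm{tr}}$ for surfaces via Murre's decomposition), of the presently known sources of finite-dimensionality (curves, abelian-type motives, and split surjections from abelian varieties as in Proposition \ref{BK} --- the mechanism this very paper uses), and of the equivalence, under the sign conjecture, with Voevodsky's nilpotence conjecture; this is the right response, and no gap is to be charged against an honest acknowledgment that the problem is open.
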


To prove Theorem \ref{2}, we will need the next two results.

\begin{Prop}[(\cite{M} Theorem 3)]\label{M} Let $X$ be a smooth projective surface. Then, there are idempotents $\pi_{i} \in \End_{\M_{k}} (\h(X))$ satisfying the following conditions:
\begin{enumerate}[label=(\roman*)] \item $\Delta_{X}  = \pi_{0} + \pi_{1} +\pi_{2}+\pi_{3}+\pi_{4}$ \item $\pi_{i}\circ\pi_{j} =  0$ for $i \neq j$ \item The motive $\h^{i} (X) = (X, \pi_{i}, 0)$ satisfies $H^{*} (\h^{i} (X)) = H^{i} (X)$ for any Weil cohomology $H^{*}$.\end{enumerate}
\end{Prop}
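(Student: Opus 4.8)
The plan is to reproduce Murre's explicit construction of a Chow--K\"unneth decomposition of a surface. Passing to a finite extension if necessary and transferring back (harmless with $\qit$-coefficients), we may assume that $X$ carries a zero-cycle $e$ of degree one and a smooth ample curve $\iota\colon C\hookrightarrow X$ containing $e$. The strategy is to produce four mutually orthogonal idempotents $\pi_0,\pi_1,\pi_3,\pi_4\in\End_{\M_k}(\h(X))=CH^{2}(X\times X)_{\qit}$ realizing the projections onto $H^{0}(X),H^{1}(X),H^{3}(X),H^{4}(X)$, and then simply to \emph{define} $\pi_2:=\Delta_X-\pi_0-\pi_1-\pi_3-\pi_4$. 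With this in hand, (i) is automatic; for (ii) and (iii), note that $p:=\pi_0+\pi_1+\pi_3+\pi_4$ is an idempotent (a sum of orthogonal idempotents), hence $\pi_2=\Delta_X-p$ is an idempotent with $\pi_2\circ\pi_i=\pi_i\circ\pi_2=0$ for $i\neq 2$, and cohomologically $\pi_2$ realizes $\mathrm{id}-(\text{projection onto }H^{0}\oplus H^{1}\oplus H^{3}\oplus H^{4})$, that is, the projection onto $H^{2}(X)$.

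The degree-$0$ and degree-$4$ projectors are immediate: $\pi_0$ and $\pi_4$ are the classes of $X\times e$ and $e\times X$ in $CH^{2}(X\times X)_{\qit}$ (in whichever order one's convention for the action of correspondences dictates). Since $\deg(e)=1$ these are orthogonal idempotents, and they realize the projections onto $H^{0}(X)$ and $H^{4}(X)$.

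The substance of the argument is the construction of $\pi_1$ (then $\pi_3:={}^{t}\pi_1$). The idea is to realize $\h^{1}(X)\cong\h^{1}(\mathrm{Alb}(X))$ as a direct summand of $\h(X)$ by means of the hyperplane section curve. By the Lefschetz hyperplane theorem $\iota^{*}\colon H^{1}(X)\to H^{1}(C)$ is injective, so $\iota_{*}\colon H^{1}(C)\to H^{3}(X)$ is surjective; correspondingly the induced morphism of Jacobians $J(C)\twoheadrightarrow\mathrm{Alb}(X)$ is surjective. By Poincar\'e complete reducibility $\mathrm{Alb}(X)$ splits off $J(C)$ up to isogeny, so the canonical identification $\h^{1}(C)=\h^{1}(J(C))$ exhibits a direct summand isomorphic to $\h^{1}(\mathrm{Alb}(X))\cong\h^{1}(X)$; the corresponding idempotent lives in $CH^{1}(C\times C)_{\qit}$ and is built from the canonical $h^1$-projector $\pi_1^{C}=\Delta_C-[C\times e_C]-[e_C\times C]$, which already incorporates the subtraction of the point-supported pieces. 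Transporting this idempotent to $X$ via $\iota$ and the Albanese map, with a normalization chosen to account for the self-intersection $C^{2}$ and to match the hard Lefschetz isomorphism $\cup[C]\colon H^{1}(X)\xrightarrow{\ \cong\ }H^{3}(X)$ cohomologically, one obtains a correspondence $\pi_1\in CH^{2}(X\times X)_{\qit}$. One then checks directly, in the rational Chow group of $X\times X$, that $\pi_1$ is idempotent, that it annihilates $H^{0}(X)$ and $H^{4}(X)$, and that it acts as the projection onto $H^{1}(X)$; after the standard modification ensuring orthogonality with $\pi_0$ and $\pi_4$ (adjusting $\pi_1$ by correction terms supported on $X\times e$ and $e\times X$, which leaves its cohomological realization unchanged), one gets the full orthogonality. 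Transposing all the resulting identities gives $\pi_3={}^{t}\pi_1$ with the analogous properties in degree $3$ and orthogonal to $\pi_0,\pi_1,\pi_4$.

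The main obstacle is precisely this construction of $\pi_1$: one must exhibit an honest idempotent in $CH^{2}(X\times X)_{\qit}$ --- not merely a cohomology class --- and verify all the orthogonality relations at the level of algebraic cycles. This is where the smooth hyperplane section $C$, the structure theory of its Jacobian (the principal polarization and Poincar\'e reducibility), and hard Lefschetz in degree $1$ enter, and the bookkeeping of the normalizing constants and of the correction terms requires care. Once $\pi_1$ and $\pi_3$ are in place, everything else is formal; this recovers Murre's Theorem~3 of \cite{M}.
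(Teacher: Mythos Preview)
The paper does not give a proof of this proposition at all: it is simply quoted as Theorem~3 of \cite{M} and used as a black box. So there is no ``paper's own proof'' to compare against; your proposal is instead a sketch of Murre's original argument.

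As such a sketch, your outline is faithful to Murre's construction. The projectors $\pi_0,\pi_4$ are the trivial point-supported ones; $\pi_1$ is built from the Picard/Albanese variety via a smooth hyperplane curve $C$, using that $J(C)\twoheadrightarrow\mathrm{Alb}(X)$ splits up to isogeny; $\pi_3={}^{t}\pi_1$; and $\pi_2$ is the complement. You correctly identify that all the work lies in producing $\pi_1$ as an honest idempotent in $CH^{2}(X\times X)_{\qit}$ and verifying orthogonality at the cycle level, and you flag the ingredients (Poincar\'e reducibility, hard Lefschetz in degree~1, the normalization involving $C^{2}$, correction terms supported on $X\times e$ and $e\times X$). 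Where your write-up remains a sketch rather than a proof is precisely in this step: you assert the existence of the correct normalization and the orthogonality corrections without writing down the explicit cycle or carrying out the verification that $\pi_1^2=\pi_1$ in the Chow ring. If you want this to stand on its own rather than defer to \cite{M}, that computation (or an explicit reference to the relevant lemmas in Murre's paper, or to the alternative construction via Scholl's $\h^1$ for curves together with the identification $\h^1(X)\cong\h^1(\mathrm{Pic}^0_{X/k})$) needs to be supplied. For the purposes of the present paper, simply citing \cite{M} as the author does is entirely adequate.
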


\begin{Rem} It should be noted that the idempotents in Proposition \ref{M} are not unique. Some extra conditions (see loc. cit.) may be imposed on $\pi_{i}$ so that the resulting motives are unique up to isomorphism. \end{Rem}

 \indent When $X$ is an Abelian variety of dimension $g$, we have the following result of Denninger and Murre which was proved using Beauville's Fourier transform (\cite{B2}). 
\begin{Thm}[(\cite{DM} Theorem 3.1)]\label{DM} There is a unique decomposition
\begin{equation} 
\h(X) = \bigoplus_{i=0}^{2g} \h^{i} (X)
\end{equation}
where $\h^{i} (X) = (X, \pi_{i}, 0)$ with $\pi_{i}$ idempotents satisfying:
\begin{enumerate}[label=(\roman*), ref=(\roman*)]
 \item\label{itm: ortho} $\pi_{i}\circ\pi_{j} = 0$ for $i \neq j$;
 \item\label{itm: n} $\prescript{t}{}{\Gamma_{n}}\circ\pi_{i} = n^{i}\cdot\pi_{i} = \pi_{i}\circ\prescript{t}{}{\Gamma_{n}}$ for all $n \in \zit$;
 \item\label{itm: transpose} $\prescript{t}{}{\pi_{i}} = \pi_{2g-i}$.
 \end{enumerate}
\end{Thm}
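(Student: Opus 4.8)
The plan is to read the asserted decomposition as the Chow--K\"unneth decomposition of the abelian variety $X$ and to produce the projectors $\pi_{i}$ as eigencomponents of the diagonal for the multiplication maps $[n]\colon X\to X$. The one substantive ingredient will be Beauville's eigenspace decomposition of the Chow ring of an abelian variety (\cite{B2}); granting it, everything below is formal manipulation of correspondences, so the whole construction is essentially a repackaging of that input.

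First I would recall from \cite{B2} that for any abelian variety $Y$ there is a canonical direct sum decomposition $CH^{*}(Y)_{\Q}=\bigoplus_{s}CH^{*}_{(s)}(Y)$ into \emph{simultaneous} eigenspaces for all the pullbacks $[n]^{*}$, $n\in\zit$, with $[n]^{*}$ acting on $CH^{p}_{(s)}(Y)$ as multiplication by $n^{2p-s}$ and with $s$ confined to a bounded range. Applied to $Y=X\times X$ (and refined by the action of the maps $[m]\times[n]$ on the two factors), this gives a simultaneous eigenspace decomposition of the ring of self-correspondences $CH^{g}(X\times X)_{\Q}=\End_{\M_{k}}(\h(X))$ for the commuting operators of left, resp.\ right, composition with $\prescript{t}{}{\Gamma_{n}}$. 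On any Weil cohomology these operators are diagonalized by the ordinary K\"unneth decomposition, acting on the summand $H^{2g-i}(X)\otimes H^{i}(X)$ of $H^{2g}(X\times X)$ by $n^{i}$; hence the only eigenvalues that occur are the $n^{i}$ with $0\leq i\leq 2g$. I then \emph{define} $\pi_{i}$ to be the component of $\Delta_{X}$ lying in the common $n^{i}$-eigenspace. Because this decomposition takes place inside $CH^{g}(X\times X)_{\Q}$, each $\pi_{i}$ is an honest algebraic cycle class; by construction $\sum_{i=0}^{2g}\pi_{i}=\Delta_{X}$, and $\prescript{t}{}{\Gamma_{n}}\circ\pi_{i}=n^{i}\pi_{i}=\pi_{i}\circ\prescript{t}{}{\Gamma_{n}}$ for all $n$, which is property \ref{itm: n}.

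Orthogonality and idempotence \ref{itm: ortho} then follow by a short computation. Writing $R$ for right composition with $\prescript{t}{}{\Gamma_{2}}$, so that $R^{k}(\Delta_{X})=\prescript{t}{}{\Gamma_{2^{k}}}$, Lagrange interpolation against the eigenvalues $2^{0},\dots,2^{2g}$ of $R$ expresses $\pi_{i}$ as a $\Q$-linear combination $\sum_{k}c_{ik}\,\prescript{t}{}{\Gamma_{2^{k}}}$ with $\sum_{k}c_{jk}2^{ki}=\delta_{ij}$; then $\pi_{i}\circ\pi_{j}=\sum_{k}c_{jk}(\pi_{i}\circ\prescript{t}{}{\Gamma_{2^{k}}})=\sum_{k}c_{jk}2^{ki}\pi_{i}=\delta_{ij}\pi_{i}$ by \ref{itm: n}. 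For the transpose law \ref{itm: transpose} I would use the single identity $\Gamma_{n}\circ\prescript{t}{}{\Gamma_{n}}=n^{2g}\Delta_{X}$ (the degree of $[n]$): together with $\prescript{t}{}{\Gamma_{n}}\circ\pi_{i}=n^{i}\pi_{i}$ it forces $\Gamma_{n}\circ\pi_{i}=n^{2g-i}\pi_{i}$, and transposing the relations that define $\pi_{i}$ shows $\prescript{t}{}{\pi_{i}}$ to lie in the common $n^{2g-i}$-eigenspace; since $\sum_{i}\prescript{t}{}{\pi_{i}}=\Delta_{X}$, uniqueness of the eigencomponents gives $\prescript{t}{}{\pi_{i}}=\pi_{2g-i}$. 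The same uniqueness yields the uniqueness clause: any admissible family is pinned down component by component by \ref{itm: n}.

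The one real obstacle is the opening step. On cohomology the K\"unneth decomposition of the diagonal is completely elementary, but lifting it to rational equivalence --- decomposing $CH^{g}(X\times X)_{\Q}$ rather than $H^{2g}(X\times X)$ --- is exactly Beauville's theorem, and it is there that the Fourier transform attached to the Poincar\'e bundle, as opposed to any naive argument, is indispensable. Once that decomposition is available, the construction of the $\pi_{i}$ and the verification of \ref{itm: ortho}--\ref{itm: transpose} are purely formal.
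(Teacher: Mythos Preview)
The paper does not prove Theorem~\ref{DM}; it is quoted from Deninger--Murre \cite{DM} with only the parenthetical remark that it ``was proved using Beauville's Fourier transform (\cite{B2}).''  So there is no proof in the paper to compare against.

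Your sketch is in fact a faithful outline of the Deninger--Murre argument: one uses Beauville's eigenspace decomposition to split $\Delta_{X}\in CH^{g}(X\times X)_{\Q}$ into simultaneous eigencycles for the operators $\alpha\mapsto \prescript{t}{}{\Gamma_{n}}\circ\alpha$ and $\alpha\mapsto\alpha\circ\prescript{t}{}{\Gamma_{n}}$, and then the orthogonality, idempotence, transpose, and uniqueness statements drop out formally, exactly as you write them (the Lagrange interpolation and the degree identity $\Gamma_{n}\circ\prescript{t}{}{\Gamma_{n}}=n^{2g}\Delta_{X}$ are correct).

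There is one genuine soft spot. You assert that ``the only eigenvalues that occur are the $n^{i}$ with $0\le i\le 2g$'' and justify this by looking at a Weil cohomology. That inference is invalid: homologically trivial eigenspaces in $CH^{g}(X\times X)_{\Q}$ are invisible in cohomology, so cohomology alone cannot bound the Chow-level spectrum. What actually gives the bound is Beauville's theorem itself (the vanishing $CH^{p}_{(s)}=0$ outside the stated range), applied not to $X\times X$ with its total multiplication but---as Deninger and Murre do---to $X\times X$ viewed as an abelian scheme over $X$ via one projection, using the relative Fourier transform. Once you invoke those Chow-level bounds rather than the cohomological shadow, the rest of your argument goes through as written; so the fix is to relocate the appeal to \cite{B2} from ``eigenspaces exist'' to ``eigenspaces exist \emph{and} the weights are confined to $\{0,\dots,2g\}$.''
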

\begin{Rem}  As suggested by the notation, we have $H^{*} (\h^{i} (X)) = H^{i} (X)$.
\end{Rem}
Since the motive of an Abelian variety $X$ is finite-dimensional and the cohomology of $\h^{i} (X)$ is concentrated in one degree,  Theorem \ref{fin-dim} implies that $\h^{i} (X)$ is finite-dimensional of parity $i \pmod 2$. From Theorem \ref{1} and Theorem \ref{fin-dim} \ref{itm: dom}, it follows that the motive of the Fano surface $\h(S)$ is also finite-dimensional and the summand $\h^{2} (S)$ as in Proposition \ref{M} is evenly finite-dimensional.

\begin{proof}[Proof of Theorem \ref{2}] Fix $\pi_{2, S}$ as in Proposition \ref{M}. We define $$\h^{2} (i) = \pi_{2, S} \circ \prescript{t}{}{\Gamma_{i}} \circ \pi_{2, A} \in \Hom_{\M_{k}} (\h^{2} (A), \h^{2} (S)) $$ The goal is to show that $\h^{2} (i)$ is an isomorphism of motives. This means that we need to find some $\psi \in \Hom_{\M_{k}} (\h^{2} (S), \h^{2} (A))$ for which \begin{equation} \begin{split} \psi \circ \h^{2} (i) = \pi_{2, A} \in \End_{\M_{k}} (\h^{2} (A)), \ \h^{2} (i) \circ \psi = \pi_{2, S} \in \End_{\M_{k}} (\h^{2} (S)) \end{split} \label{last} \end{equation} Since $\h^{2} (A)$ and $\h^{2} (S)$ are evenly finite-dimensional, it suffices by Theorem \ref{fin-dim} \ref{itm: cohom} to find some such $\psi$ for which these equalities hold cohomologically. Moreover, $i^{*} = \h^{2} (i)_{*} : H^{2} (A, \Q_{\ell}) \to H^{2} (S, \Q_{\ell})$ is an isomorphism for $\ell \neq \text{char } k$. Thus, if we can find some \begin{equation} \gamma \in CH^{2} (S \times A)\label{gamma} \end{equation} for which $\gamma_{*}: H^{2} (S, \Q_{\ell}) \to H^{2} (A, \Q_{\ell})$ is the inverse of $i^{*}$, we can set $\psi := \pi_{2, A}\circ\gamma\circ \pi_{2,S}$, and this gives the desired correspondence in (\ref{last}).\\
\indent From \cite{B1}, the image of the map $S \times S \to A$ defined by $(x, y) \mapsto i(x) - i(y)$ is an ample divisor $\Theta$. Moreover, we have $$\frac{1}{3!}\cdot \Theta\wedge\Theta\wedge\Theta = [S] \in H^{6} (A, \Q_{\ell} (3))$$ The Hard Lefschetz theorem then implies that $\wedge [S] : H^{2} (A, \Q_{\ell}) \to H^{8} (A, \Q_{\ell}) (3)$ is an isomorphism. Also, since the Lefschetz standard conjecture is true for Abelian varieties (\cite{KL} Proposition 4.3), it follows that there is a correspondence $\phi \in CH^{2} (A \times A)$ such that $\phi_{*}: H^{8} (A, \Q_{\ell}) \to H^{2} (A, \Q_{\ell}) (-3)$ is the inverse of $\wedge [S]$. The projection formula then shows that $i_{*}\circ i^{*} = \wedge [S]$ so that $\phi_{*}\circ i_{*}\circ i^{*}$ is the identity on $H^{2} (A, \Q_{\ell})$. Since $i^{*} : H^{2} (A, \Q_{\ell}) \to H^{2} (S, \Q_{\ell})$ is an isomorphism, it follows that $\gamma = \phi\circ \Gamma_{i}$ is the desired inverse. 

\end{proof}

\end{document}